\newtheorem{theorem}{Theorem}
\newtheorem{lemma}[theorem]{Lemma}
\newtheorem{definition}[theorem]{Definition}
\newcommand{\cF}{{\mathcal{F}_\varphi^2}(\CC)}
\newcommand{\Hol}{\mathrm{Hol}(\CC)}
\newcommand\CC{\mathbb{C}}
\newcommand\RR{\mathbb{R}}
\begin{document}

\title[Convergence of Lagrange interpolation]{Convergence of Lagrange interpolation series in the Fock spaces}
\author[Dumont]{Andr\'e Dumont}
\author[Kellay]{Karim Kellay}
\address{LATP\\CMI\\Universite Aix--Marseille \\
39, rue F. Joliot-Curie\\13453 Marseille\\France}

\email{dumont@cmi.univ-mrs.fr}
\address{IMB\\Universite Bordeaux I\\
351 cours de la Liberation\\33405 Talence \\France}
\email{Karim.Kellay@math.u-bordeaux1.fr}

\keywords{Fock spaces, Lagrange interpolation, convergence, summability methods}
\subjclass[2000]{Primary 30H05; Secondary 30D10, 30E05}

\begin{abstract} We study the uniqueness sets, the weak interpolation 
sets, and convergence of the Lagrange interpolation series in radial weighted Fock spaces.
\end{abstract}

\maketitle

\section{Introduction and main results.}
In this paper we study the weighted Fock spaces $\cF$
$$
\cF=\Bigl\{f\in \Hol \text{ : } \|f\|_\varphi^2=\int_\CC |f(z)|^2e^{-2\varphi(|z|)}\,dm(z)<\infty\Bigr\};
$$
here $dm$ is area measure and $\varphi$ is an increasing function defined on $[0,+\infty)$, $\lim_{x\to\infty}\varphi(x)=\infty$.   
We assume that $\varphi(z)=\varphi(|z|)$ is $C^2$ smooth and subharmonic on $\CC$, and set 
$$
\rho(z)=(\Delta \varphi(z))^{-1/2}.
$$
One more condition on $\varphi$ is that
for every fixed $C$,
$$
\rho(x+C\rho(x))\asymp \rho(x),\qquad 0<x<\infty.
$$
(In particular, this holds if $\rho'(x)=o(1)$, $x\to\infty$.)

Typical $\varphi$ are power functions
$$
\varphi(r)=r^a,\quad a>0.
$$
Then
$$
\rho(x)\asymp x^{1-a/2},\qquad x>1.
$$
Furthermore, if
$$
\varphi(r)=(\log r)^2, 
$$
then
$$
\rho(x)\asymp x,\qquad x>1.
$$

Given $z,w\in \CC$, we define 
$$
d_\rho(z,w)=\frac{|z-w|}{\min(\rho(z),\rho(w))}.
$$
We say that a subset $\Lambda$ of $\CC$  of is  $d_\rho$--separated  if
$$
\inf_{\lambda\neq\lambda^*}\{d_\rho(\lambda,\lambda^*),\; \lambda,\lambda^*\in \Lambda\}>0.
$$
\smallskip

\begin{definition}
Given $\gamma\in \RR$, we say that an entire function $S$ belongs 
to the class $\mathcal{S}_{\gamma}$ if 
\begin{enumerate}
\item[\text{$(1)$}] the zero set  $\Lambda$ of $S$ is 
$d_\rho$-separated, and 
\item[\text{$(2)$}] 
$$
|S(z)|\asymp {e^{\varphi(z)}} \frac{d(z,\Lambda)}{\rho(z)}\frac{1}{(1+|z|)^{\gamma}}, 
\qquad z\in \CC.
$$
\end{enumerate}
\end{definition}
\smallskip

For constructions of such functions in radial weighted
Fock spaces see, for example, \cite{BDK} and \cite{BL}.

In the standard Fock spaces ($\varphi(r)=r^2$) the classes 
$\mathcal{S}_{\gamma}$ were introduced by Lyubarskii in \cite{L}. They are analogs of the sine type functions for the Paley--Wiener space,
and their zero sets include rectangular lattices and their perturbations.

\begin{definition}
A set $\Lambda\subset\mathbb C$ is called {\it a weak interpolation set} 
for $\cF$ if for every $\lambda\in \Lambda$ there exists $f_\lambda\in \cF$ such that $f_\lambda(\lambda)=1$ and $f_\lambda|\Lambda\backslash\{\lambda\}=0$. 

A set $\Lambda\subset\mathbb C$ is called {\it a uniqueness set} 
for $\cF$ if $f\in \cF$ and  $f|\Lambda=0$ imply together that $f=0$.
\end{definition}

\begin{theorem}\label{uniqueness-minimal}  Let $S\in  \mathcal{S}_\gamma$, 
and denote by $\Lambda$ the zero set of $S$. Then 
\begin{enumerate}
\item[$(a)$] $\Lambda$ is a uniqueness set if and only if $\gamma\le 1$,
\item[$(b)$] $\Lambda$ is a weak interpolation set if and only if $\gamma>0$.
\end{enumerate}
\end{theorem}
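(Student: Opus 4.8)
The plan is to reduce both statements to two elementary lemmas about entire functions in radially weighted $L^2$ spaces. Since $S\in\mathcal{S}_\gamma$ has simple, $d_\rho$-separated zeros exactly on $\Lambda$ (the factor $d(z,\Lambda)$ appears to the first power), any $f\in\cF$ with $f|_\Lambda=0$ factors as $f=Sh$ with $h$ entire, and any candidate interpolant $f_\lambda$ vanishing on $\Lambda\setminus\{\lambda\}$ factors as $f_\lambda=Sg/(z-\lambda)$ with $g$ entire; the normalization $f_\lambda(\lambda)=1$ forces $g(\lambda)=1/S'(\lambda)\neq0$. Thus everything is governed by when the entire quotient ($h$, resp.\ $g$) can live in a weighted $L^2$ space.

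The first key step is a \emph{decoupling lemma}: for every entire $h$,
$$
\|Sh\|_\varphi^2\asymp\int_\CC|h(z)|^2(1+|z|)^{-2\gamma}\,dm(z).
$$
Substituting the pointwise asymptotics of $S$ turns the left side into $\int_\CC \frac{d(z,\Lambda)^2}{\rho(z)^2}\,|h(z)|^2(1+|z|)^{-2\gamma}\,dm(z)$, so the content is that the factor $d(z,\Lambda)^2/\rho(z)^2$ averages out against $|h|^2$. The upper bound is immediate from $d(z,\Lambda)\lesssim\rho(z)$. For the lower bound I would partition $\CC$ into Voronoi cells $Q_{\lambda^*}$, one per $\lambda^*\in\Lambda$, each comparable to a disk of radius $\asymp\rho(\lambda^*)$; on $Q_{\lambda^*}$ one has $d(z,\Lambda)=|z-\lambda^*|$, and on the outer part $Q_{\lambda^*}\setminus D(\lambda^*,c\rho(\lambda^*))$ the weight already satisfies $d(z,\Lambda)^2/\rho^2\gtrsim1$. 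On the inner disk $D$ I would expand $h$ about $\lambda^*$ and use the orthogonality of the monomials $(z-\lambda^*)^n$ in $L^2$ of the disk, which gives $\int_{D}|z-\lambda^*|^2|h|^2\,dm\gtrsim\rho^2\int_{D}|h|^2\,dm$ since $\frac{2n+2}{2n+4}\ge\frac12$ termwise. Summing the cellwise inequality against the radial weight, which is $\asymp(1+|\lambda^*|)^{-2\gamma}$ on $Q_{\lambda^*}$, yields the lemma.

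The second step is a Liouville-type computation: writing $h=\sum_n a_n z^n$ and integrating in polar coordinates,
$$
\int_\CC|h(z)|^2(1+|z|)^{-2\beta}\,dm(z)=2\pi\sum_n|a_n|^2\int_0^\infty\frac{r^{2n+1}}{(1+r)^{2\beta}}\,dr,
$$
and the $n$-th integral is finite precisely when $n<\beta-1$. Hence a nonzero entire $h$ lies in $L^2\bigl((1+|z|)^{-2\beta}\,dm\bigr)$ if and only if $\beta>1$, in which case constants already work. Combining with the decoupling lemma, $f=Sh\in\cF$ is equivalent to $h\in L^2((1+|z|)^{-2\gamma}\,dm)$, so a nonzero such $f$ exists iff $\gamma>1$; this shows $\Lambda$ is a uniqueness set iff $\gamma\le1$, giving $(a)$. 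For $(b)$, the same reasoning applied to $Sg/(z-\lambda)$, whose growth is that of $\mathcal{S}_{\gamma+1}$ away from the fixed point $\lambda$, gives $f_\lambda\in\cF$ iff $g\in L^2((1+|z|)^{-2(\gamma+1)}\,dm)$; a nonzero $g$ with $g(\lambda)\neq0$ exists iff $\gamma+1>1$, i.e.\ iff $\gamma>0$, proving $(b)$.

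The main obstacle is the lower bound in the decoupling lemma, together with the geometric input it rests on: that the cells $Q_{\lambda^*}$ genuinely tile $\CC$ with sizes $\asymp\rho(\lambda^*)$, i.e.\ that $\Lambda$ has no large gaps ($\sup_z d(z,\Lambda)/\rho(z)<\infty$), which must be extracted from the definition of $\mathcal{S}_\gamma$ and the regularity hypothesis $\rho(x+C\rho(x))\asymp\rho(x)$. A minor technical point in $(b)$ is that deleting the single zero $\lambda$ alters $S/(z-\lambda)$ only in a bounded neighborhood of $\lambda$, so it changes the norm by a finite amount and does not affect the integrability at infinity that governs the existence of $g$.
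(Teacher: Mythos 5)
Your proposal is correct, and it shares the paper's overall skeleton --- factor $f=Sh$ (resp.\ $f_\lambda=Sg/(z-\lambda)$ with $g(\lambda)=1/S'(\lambda)$), translate membership in $\cF$ into weighted $L^2$ integrability of the entire quotient, and finish with the polar-coordinates Liouville computation showing a nonzero entire function lies in $L^2((1+|z|)^{-2\beta}dm)$ iff $\beta>1$ --- but your key technical step is genuinely different. The paper proves only the one-sided bound it needs for the hard direction of (a): it covers $\Lambda$ by discs $D(\lambda,\varepsilon\rho(\lambda))$, notes that off the doubled discs $d(w,\Lambda)\gtrsim\rho(w)$, and disposes of the inner discs by the sub-mean-value comparison $\int_{D(\lambda,\varepsilon\rho(\lambda))}|F|^2\,dm\le c_2\int_{D(\lambda,2\varepsilon\rho(\lambda))\setminus D(\lambda,\varepsilon\rho(\lambda))}|F|^2\,dm$, so that the annular contribution dominates and $\int_\CC|F|^2(1+|w|)^{-2\gamma}dm<\infty$ follows; it then gets (b) cheaply by taking $f_\lambda=S/[S'(\lambda)(\cdot-\lambda)]$ for $\gamma>0$ and, for $\gamma\le 0$, invoking (a) for $\Lambda\setminus\{\lambda\}$ (implicitly, $S/(\cdot-\lambda)$ as an $\mathcal{S}_{\gamma+1}$-type function). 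You instead establish a two-sided decoupling equivalence $\|Sh\|_\varphi^2\asymp\int_\CC|h|^2(1+|z|)^{-2\gamma}dm$, with the inner-disc lower bound obtained exactly from orthogonality of monomials (the termwise inequality $\tfrac{2n+2}{2n+4}\ge\tfrac12$ giving $\int_D|z-\lambda^*|^2|h|^2dm\ge\tfrac{t^2}{2}\int_D|h|^2dm$), which is more elementary than the sub-mean-value trick and treats (a) and (b) by one uniform mechanism, at the cost of needing the Voronoi cells to have diameter $\asymp\rho(\lambda^*)$. The density input you flag as the main obstacle, $\sup_z d(z,\Lambda)/\rho(z)<\infty$, is indeed left unproved, but the paper relies on it just as implicitly (it is needed for $\gamma>1\Rightarrow S\in\cF$ and for the assertion that $f_\lambda\in\cF$ is ``obvious''), and it does follow from the definition of $\mathcal{S}_\gamma$: on a hypothetical zero-free disc $D(z_0,M\rho(z_0))$ the function $\log|S|$ is harmonic, while $\Delta\varphi=\rho^{-2}$ forces a subharmonicity defect of order $M^2$ against a gain of only $O(\log M)$ in the two-sided estimate, which is a contradiction for large $M$. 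So your argument is sound, and modulo that shared implicit fact it is a self-contained variant that actually proves slightly more (a norm equivalence rather than a one-sided estimate).
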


Denote by $\Bbbk_z$ the reproducing kernel in the space $\cF$:
$$
\langle f, \Bbbk_z,\rangle_{\cF}=f(z),\qquad f\in\cF,\quad z\in\mathbb C.
$$

It is known \cite{S,IL,BL} that the space $\cF$ does not admit Riesz 
bases of the (normalized) reproducing kernels for regular 
$\varphi$, $\varphi(x)\gg(\log x)^2$. 
On the other hand, by Theorem~\ref{uniqueness-minimal}, for $0<\gamma\le 1$,
the family $\{\Bbbk_\lambda\}_{\lambda\in\Lambda}$ is a complete minimal family in $\cF$. Then the family $\{S/[S'(\lambda)(\cdot-\lambda)]\}_{\lambda\in\Lambda}$ is the biorthogonal system
and we associate to any $f\in\cF$ the formal (Lagrange interpolation) series
$$
f\sim \sum_{\lambda\in\Lambda}f(\lambda)\frac{S}{S'(\lambda)(\cdot-\lambda)}.
$$
It is natural to ask whether this formal series converges if we modify the norm of the space.

Denote by $\Lambda=\{\lambda_k\}$ the zero sequence of $S$
ordered in such a way that $|\lambda_k|\le|\lambda_{k+1}|$, 
$k\ge 1$. 
Following Lyubarskii \cite{L} and Lyubarskii--Seip \cite{LS} we 
obtain the following result:

\begin{theorem}\label{T: Convergence}
Let $0\le\beta\le 1$, $\gamma+\beta\in (1/2,1)$, and let 
$S\in \mathcal{S}_\gamma$.
Suppose that 
\begin{equation}
\label{x11}
r^{1-2\beta}=O(\rho(r)), \qquad r\to+\infty.
\end{equation} 
Then for every $f\in \cF$ we have
$$
\lim_{N\to \infty}
\Bigl\|f-S\sum_{k=1}^{N}\frac{f(\lambda_k)}{S'(\lambda_k)(\cdot-\lambda_k)}\Bigr\|_
{\varphi_\beta}=0, 
$$
where $\varphi_\beta(r)=\varphi(r)+\beta\log(1+r)$.
\end{theorem}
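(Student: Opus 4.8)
The plan is to establish the two standard ingredients behind a uniform-boundedness-plus-density argument. First I would record the preliminary estimates that drive everything in these spaces: the diagonal asymptotics $\|\Bbbk_z\|_\varphi\asymp e^{\varphi(z)}/\rho(z)$, the derivative bound $|S'(\lambda_k)|\asymp e^{\varphi(\lambda_k)}/(\rho(\lambda_k)(1+|\lambda_k|)^{\gamma})$ read off from the definition of $\mathcal{S}_\gamma$, the pointwise bound $|f(z)|\lesssim\|f\|_\varphi\, e^{\varphi(z)}/\rho(z)$, and, crucially, the Bessel inequality $\sum_k|f(\lambda_k)|^2\rho(\lambda_k)^2e^{-2\varphi(\lambda_k)}\lesssim\|f\|_\varphi^2$, valid because the normalized kernels $\{\rho(\lambda_k)e^{-\varphi(\lambda_k)}\Bbbk_{\lambda_k}\}$ form a Bessel sequence over the $d_\rho$-separated set $\Lambda$. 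Since $\varphi_\beta\ge\varphi$, the inclusion $\cF\hookrightarrow\mathcal F_{\varphi_\beta}^2(\CC)$ is contractive, so it suffices to prove that the ordered partial-sum operators $S_Nf=\sum_{k\le N}f(\lambda_k)\psi_k$, with $\psi_k=S/[S'(\lambda_k)(\,\cdot\,-\lambda_k)]$, are uniformly bounded from $\cF$ into $\mathcal F_{\varphi_\beta}^2(\CC)$, and that $S_Nf\to f$ in $\|\cdot\|_{\varphi_\beta}$ on a dense subset of $\cF$; a $3\varepsilon$-argument using $\|S_N(f-d)\|_{\varphi_\beta}\lesssim\|f-d\|_\varphi$ then finishes.

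The uniform boundedness is the analytic heart. Writing $c_k=f(\lambda_k)\rho(\lambda_k)e^{-\varphi(\lambda_k)}$ and $g_k=\rho(\lambda_k)^{-1}e^{\varphi(\lambda_k)}\psi_k$, I have $S_Nf=\sum_{k\le N}c_kg_k$ with $\|(c_k)\|_{\ell^2}\lesssim\|f\|_\varphi$ and $|g_k(z)|\asymp(1+|\lambda_k|)^{\gamma}|S(z)|/|z-\lambda_k|$. Using the class estimate in the form $|S(z)|^2e^{-2\varphi_\beta(z)}\asymp d(z,\Lambda)^2\rho(z)^{-2}(1+|z|)^{-2(\gamma+\beta)}$, the required bound reduces, uniformly in $N$, to the weighted truncated Cauchy-transform inequality
\[
\int_\CC \frac{d(z,\Lambda)^2}{\rho(z)^2(1+|z|)^{2(\gamma+\beta)}}\Bigl|\sum_{|\lambda_k|\le R_N}\frac{(1+|\lambda_k|)^{\gamma}c_k}{z-\lambda_k}\Bigr|^2\,dm(z)\lesssim\sum_k|c_k|^2 ,
\]
where $R_N$ is a gap radius of $\Lambda$ enclosing exactly $\lambda_1,\dots,\lambda_N$. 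I would prove this by decomposing $\CC$ into $\Lambda$-adapted cells $Q_k\ni\lambda_k$ of size $\asymp\rho(\lambda_k)$ — legitimate because $\Lambda$ is $d_\rho$-separated and $\rho(x+C\rho(x))\asymp\rho(x)$ makes $\rho$ essentially constant on each cell — and separating the diagonal, near-field and far-field contributions of the Cauchy kernel, controlling the off-diagonal part by a Schur test / almost-orthogonality argument.

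This weighted truncated estimate, uniform in the cut-off, is the main obstacle, and it is exactly here that both hypotheses are consumed. A single-term computation already shows $\|g_k\|_{\varphi_\beta}^2\asymp(1+|\lambda_k|)^{-2\beta}\log\!\bigl(|\lambda_k|/\rho(|\lambda_k|)\bigr)$, the logarithm arising from the near-field lattice sum $\sum_{j\ne k}|\lambda_j-\lambda_k|^{-2}$; this is tamed precisely by the hypothesis $r^{1-2\beta}=O(\rho(r))$, which forces $\log(r/\rho(r))\lesssim\log r$ and hence $\|g_k\|_{\varphi_\beta}\lesssim1$. The exponent condition $\gamma+\beta\in(1/2,1)$ plays the role of an $A_2$-type range for the radial weight $(1+|z|)^{-2(\gamma+\beta)}$: the bound $\gamma+\beta>1/2$ supplies the square-summability/Carleson estimate needed to sum the near-field across scales, while $\gamma+\beta<1$ guarantees convergence of the far-field and global integrals and, via Theorem~\ref{uniqueness-minimal}$(a)$ applied in $\mathcal F_{\varphi_\beta}^2(\CC)$ (where $S\in\mathcal{S}_{\gamma+\beta}$, since the Laplacian weight of $\varphi_\beta$ satisfies $\rho_\beta\asymp\rho$), keeps $\Lambda$ a uniqueness set, so that the series recovers all of $f$ rather than a proper projection.

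For the density step I would verify $S_Nf\to f$ in $\|\cdot\|_{\varphi_\beta}$ on the polynomials, which are dense in $\cF$. The tool is the residue representation of the remainder,
\[
f(z)-S_Nf(z)=\frac{1}{2\pi i}\oint_{|\zeta|=R_N}\frac{S(z)f(\zeta)}{S(\zeta)(\zeta-z)}\,d\zeta ,
\]
with the circles chosen in the gaps of $\Lambda$, where $|S(\zeta)|\asymp e^{\varphi(R_N)}(1+R_N)^{-\gamma}$. For a fixed polynomial the integrand is $O\!\bigl(e^{-\varphi(R_N)}(1+R_N)^{\gamma+\deg f}\bigr)$ on the contour, which yields local uniform convergence $S_Nf\to f$ on compacta; combined with the uniform bound of the previous paragraphs to control the tail $\int_{|z|>R_N}$, this gives $\|f-S_Nf\|_{\varphi_\beta}\to0$ for polynomials. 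Assembling this with the uniform boundedness through the $3\varepsilon$-argument completes the proof.
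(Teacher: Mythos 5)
Your overall architecture (uniform boundedness of the partial-sum operators plus convergence on a dense subset) is a legitimately different strategy from the paper, which instead writes the remainder directly as a contour integral $S(z)I_N(z,f)$ and estimates its $\varphi_\beta$-norm by duality, reducing everything to the uniform boundedness of the truncated Cauchy-type operators $T_N\colon L^2(\CC)\to L^2(\gamma_N)$ (\cite[Lemma 13]{LS}) together with the vanishing of $R_N\rho(R_N)\int_{\gamma_N}|f|^2e^{-2\varphi}$ along the contours. However, your proposal has two genuine gaps, and they sit exactly at the points your plan identifies as its load-bearing steps. First, the uniform-boundedness inequality for the sharply truncated sums $\sum_{|\lambda_k|\le R_N}(1+|\lambda_k|)^\gamma c_k/(z-\lambda_k)$ is only asserted, and the proposed mechanism (Schur test / almost-orthogonality) cannot deliver it: the kernel is not absolutely summable at density $\rho^{-2}$ (already for $\rho\asymp 1$ the sum $\sum_k|z-\lambda_k|^{-1}$ over a $d_\rho$-separated set diverges linearly in the truncation radius), so any argument that takes absolute values fails, and one needs genuine cancellation of Hilbert-transform type; moreover the sharp cutoff at $|\lambda_k|\le R_N$ creates a boundary singularity which is precisely where the hypothesis $\gamma+\beta\in(1/2,1)$ is consumed. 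This is the entire content of \cite[Lemma 13]{LS}, which you neither prove nor invoke; your single-term computation $\|g_k\|_{\varphi_\beta}\lesssim 1$ is correct but is far from the uniform operator bound.

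Second, the density step relies on circles $|\zeta|=R_N$ ``chosen in the gaps of $\Lambda$'' with $|S(\zeta)|\asymp e^{\varphi(R_N)}(1+R_N)^{-\gamma}$ on the whole circle, and such circles need not exist. A $d_\rho$-separated zero set can contain $\asymp R/\rho(R)$ points in the annulus $\{R-\rho(R)<|z|<R+\rho(R)\}$; each blocks a radial interval of length $\asymp\varepsilon\rho(R)$ for the condition $d_\rho(\zeta,\Lambda)\ge\varepsilon$, so the blocked radial measure can be $\asymp\varepsilon R$, vastly exceeding the annulus width $2\rho(R)$, and no admissible circle survives. In addition, several zeros may have (nearly) equal moduli, so a circle cannot separate exactly $\{\lambda_k\}_{k\le N}$ from $\{\lambda_k\}_{k>N}$, which the ordered partial sums require. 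This is exactly why the paper constructs the $K$-bounded perturbed contours $\Gamma_N=R_N\gamma_N$, pushing the curve inward or outward near each nearby zero via the bump function $\psi$ with signs $s_k=\pm1$ according to whether $k\le N$ or $k>N$. Without that construction (or an equivalent substitute), both your lower bound for $|S|$ on the contour and the exact separation of the first $N$ zeros fail, and the residue representation of the remainder does not get off the ground.
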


The result corresponding to $\beta=1/2$, $\varphi(r)=r^2$, 
$\rho(r)\asymp 1$ is contained in \cite[Theorem 10]{LS}. On the other hand, in the case $\varphi(r)=(\log r)^a$, $1<a\le 2$, $r>2$, 
$\rho(r)\asymp r$, the space $\cF$ contains Riesz bases of 
(normalized) reproducing kernels \cite{BL}.
Our theorem shows that in the case $\rho(r)\asymp r$, when 
$S\in \mathcal{S}_\gamma$, $\gamma\in (1/2,1)$,
the interpolation 
series converges already in $\cF$. Now, it is interesting  to find 
out how sharp is condition \eqref{x11} in Theorem~\ref{T: Convergence}.

\begin{theorem}\label{sharp}
Let $0<a\le 2$, $\varphi(r)=r^a$, $r>1$, 
$\rho(x)\asymp x^{1-a/2}$, $x>1$. If 
$0\le\beta<a/4$, $\gamma\in\mathbb R$, and  
$S\in \mathcal{S}_\gamma$, then there exists 
$f\in \cF$ such that
$$
\Bigl\|f-S\sum_{k=1}^{N}\frac{f(\lambda_k)}{S'(\lambda_k)(\cdot-\lambda_k)}\Bigr\|_
{\varphi_\beta}\not\to 0,\qquad N\to \infty. 
$$
\end{theorem}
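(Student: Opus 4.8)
The plan is to recast the assertion as the \emph{unboundedness} of the partial-sum operators and then invoke the uniform boundedness principle. Write $P_N f = S\sum_{k=1}^N \frac{f(\lambda_k)}{S'(\lambda_k)(\cdot-\lambda_k)}$ and regard $P_N$ as a finite-rank (hence bounded) operator from $\cF$ into the weighted Fock space $\mathcal F^2_{\varphi_\beta}$ carrying the weight $\varphi_\beta$. Since $\beta\ge 0$ we have $e^{-2\varphi_\beta}\le e^{-2\varphi}$, so the inclusion $\cF\hookrightarrow\mathcal F^2_{\varphi_\beta}$ is a contraction and $\|f-P_Nf\|_{\varphi_\beta}$ makes sense for every $f\in\cF$. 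If this quantity tended to $0$ for \emph{every} $f\in\cF$, then $P_N$ would converge strongly to the inclusion map, and Banach--Steinhaus would force $\sup_N\|P_N\|_{\cF\to\varphi_\beta}<\infty$. Thus it suffices to prove that $\sup_N\|P_N\|_{\cF\to\varphi_\beta}=+\infty$; the witnessing $f$ is then produced by the uniform boundedness principle.

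Second, I would assemble the asymptotic ingredients available for the power weight. From $\rho(x)\asymp x^{1-a/2}$ one gets the counting estimate $\#\{k:|\lambda_k|\le R\}\asymp\int_{|z|\le R}\rho(z)^{-2}\,dm\asymp R^a$, i.e. $|\lambda_k|\asymp k^{1/a}$. The definition of $\mathcal S_\gamma$ gives $|S'(\lambda_k)|\asymp e^{\varphi(\lambda_k)}\rho(\lambda_k)^{-1}(1+|\lambda_k|)^{-\gamma}$, while the diagonal reproducing kernel satisfies $\Bbbk_z(z)\asymp e^{2\varphi(z)}\rho(z)^{-2}$. A direct computation then yields, for the biorthogonal functions $g_k:=S/[S'(\lambda_k)(\cdot-\lambda_k)]$, the norm $\|g_k\|_{\varphi_\beta}\asymp \rho(\lambda_k)e^{-\varphi(\lambda_k)}(1+|\lambda_k|)^{-\beta}$. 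Testing $P_N$ on a single normalized reproducing kernel $\Bbbk_{\lambda_n}/\|\Bbbk_{\lambda_n}\|$ essentially recovers only the diagonal term, giving a ratio $\lesssim(1+|\lambda_n|)^{-\beta}$ that stays bounded; likewise, were the $g_k$ mutually $\varphi_\beta$-orthogonal, the Bessel inequality $\sum_k|f(\lambda_k)|^2\rho(\lambda_k)^2e^{-2\varphi(\lambda_k)}\lesssim\|f\|_\varphi^2$ would give $\|P_Nf\|_{\varphi_\beta}\lesssim\|f\|_\varphi$. Hence the divergence cannot come from the diagonal: it must be produced by the \emph{coherent accumulation} of the off-diagonal correlations $\langle g_j,g_k\rangle_{\varphi_\beta}$, the point being that the extra weight $(1+r)^{\beta}$ is too weak to decorrelate them when $\beta<a/4$.

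Third, to exhibit this accumulation I would use test functions adapted to the radial geometry rather than point masses. Natural candidates are the monomials $f=z^m$ (orthogonal, with $r^m e^{-\varphi(r)}$ concentrated in a $\rho(r_m)$-annulus about $|z|=r_m$, $r_m\asymp m^{1/a}$, so that $z^m$ excites $\asymp r_m/\rho(r_m)\asymp r_m^{a/2}$ of the $\lambda_k$ with comparable magnitude and slowly varying phase), or coherent superpositions $\sum_k c_k\Bbbk_{\lambda_k}$ whose coefficients follow the leading eigenvector of the Gram matrix $[\langle g_j,g_k\rangle_{\varphi_\beta}]_{j,k\le N}$. In either description the estimate of $\|P_Nf\|_{\varphi_\beta}/\|f\|_\varphi$ reduces, after integrating out the angular variable, to a one-dimensional weighted sum over the radii $|\lambda_k|$. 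I would show that this sum grows like a positive power of $|\lambda_N|$ exactly in the regime $\beta<a/4$: there the deficit between the natural length scale $\rho(r)\asymp r^{1-a/2}$ and the scale $r^{1-2\beta}$ supplied by $\varphi_\beta$ --- precisely the failure of \eqref{x11} --- renders the relevant series non-summable, whereas for $\beta\ge a/4$ it converges, in agreement with Theorem~\ref{T: Convergence}.

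Finally, the main obstacle is exactly this last lower bound. Two difficulties must be overcome. First, the cross-correlations $\langle g_j,g_k\rangle_{\varphi_\beta}$ are oscillatory: the phases of $S'(\lambda_k)$, of $(z-\lambda_k)^{-1}$, and of $S$ itself all enter, and one must verify that along the chosen test family these phases align rather than cancel, so that the overlaps genuinely add up; this is where the concrete asymptotics of functions in $\mathcal S_\gamma$ for the power weight, together with the near-lattice structure of $\Lambda$, are indispensable. Second, one must pass from a favourable \emph{coefficient sequence} back to an honest function $f\in\cF$ realizing it with controlled norm; for this I would either work directly with the monomials, for which membership in $\cF$ is automatic, or invoke the sampling/lower-frame properties of the $d_\rho$-separated set $\Lambda$. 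Once the growth $\|P_N\|_{\cF\to\varphi_\beta}\gtrsim|\lambda_N|^{\delta}$ with some $\delta=\delta(a,\beta)>0$ is established for $\beta<a/4$, the uniform boundedness principle delivers the required $f$.
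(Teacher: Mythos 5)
Your opening reduction is sound: if $\bigl\|f-P_Nf\bigr\|_{\varphi_\beta}\to 0$ for every $f\in\cF$, Banach--Steinhaus forces $\sup_N\|P_N\|<\infty$, so it suffices to show the partial-sum operators are not uniformly bounded; and your kinematic inventory ($|\lambda_k|\asymp k^{1/a}$, $|S'(\lambda_k)|\asymp e^{\varphi(\lambda_k)}\rho(\lambda_k)^{-1}(1+|\lambda_k|)^{-\gamma}$, the norms of the biorthogonal functions $g_k$) is all correct. But the proposal stops exactly where the proof has to begin: you never establish $\sup_N\|P_N\|=+\infty$. The two test families you name are only candidates, and neither is shown to work. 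For $f=z^m$ you would need a lower bound on $\bigl|S(z)\sum_{k\le N}\lambda_k^m/[S'(\lambda_k)(z-\lambda_k)]\bigr|$, and the class $\mathcal S_\gamma$ prescribes only $|S|$, not the phases $\arg S'(\lambda_k)$ or $\arg S(\zeta)$ on the contours, so you have no mechanism whatsoever to rule out the cancellation you yourself flag as the main obstacle. For the Gram-eigenvector route you would first need asymptotics of $\langle g_j,g_k\rangle_{\varphi_\beta}$ that you do not derive and that are not available at this level of generality. Symptomatically, the hypothesis $\beta<a/4$ never enters any estimate in your text: it appears only as the regime in which an unspecified one-dimensional series is \emph{asserted} to diverge. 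As written this is a program, not a proof, and its central analytic step is the entire content of the theorem.

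The paper (following \cite[Theorem~11]{LS}) supplies precisely the missing mechanism by phase-locking the test function to $S$ itself, which makes the oscillation problem disappear. One takes the Taylor blocks $S_R=\sum_{|n-aR^a|<R^{a/2+\varepsilon}}s_nz^n$ with $0<\varepsilon<\frac a2-2\beta$; these satisfy $|S(z)-S_R(z)|e^{-|z|^a}=O(|z|^{-n})$ for $\bigl||z|-R\bigr|<\rho(R)$, together with the norm bound $\|S_R\|_\varphi^2\lesssim R^{2-a/2+\varepsilon-2\gamma}$, and one sets $f=\sum_k R_k^{-\varkappa}S_{R_k}$ along a lacunary sequence $R_k=|\lambda_{N_k}|$. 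The payoff is that $f/S=R_k^{-\varkappa}+O(R_k^{-1-\varkappa})$ is essentially \emph{constant} on $\Gamma_{N_k}$, so the Cauchy integral $\int_{\Gamma_{N_k}}f(\zeta)/[S(\zeta)(z-\zeta)]\,d\zeta$ is evaluated by a trivial residue computation --- no phase alignment needs to be verified --- yielding $\bigl|S(z)\int_{\Gamma_{N_k}}\frac{f(\zeta)}{S(\zeta)(z-\zeta)}\,d\zeta\bigr|\gtrsim R_k^{-\varkappa}e^{|z|^a}(1+|z|)^{-\gamma}$ for $|z|<R_k/2$, hence $A_k\gtrsim R_k^{1-\beta-\gamma-\varkappa}\to\infty$. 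The hypothesis $\beta<a/4$ is used exactly once and decisively: it makes the window $1-\frac a4+\frac\varepsilon2-\gamma<\varkappa<1-\beta-\gamma$ nonempty, reconciling $f\in\cF$ (the left endpoint, coming from $\|S_{R_k}\|_\varphi$) with divergence of $A_k$ (the right endpoint). Unless you import an idea of this kind --- a test function for which $f/S$ is nearly constant on the contours --- your outline cannot be completed as stated; in particular a single monomial is a poor candidate, since $\zeta^m/S(\zeta)$ genuinely oscillates on $\Gamma_N$, which is why the paper uses a coherent block of $\sim R^{a/2+\varepsilon}$ monomials with the coefficients of $S$ itself.
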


Thus, for the 
power weights $\varphi(r)=r^a$, $0<a\le 2$, we need to modify the norm to get the convergence, and the critical value of $\beta$ is $a/4$.

The notation $A\lesssim B$ means that there is a constant $C$ independent of the
relevant variables such that $A \leq CB$. We write   $A\asymp B$  if both $A\lesssim B$ and $B\lesssim A$.

\subsection*{Acknowledgements} The authors are grateful to Alexander Borichev for  very helpful discussions and comments. 

\section{Proofs.}

\subsection{Proof of Theorem \ref{uniqueness-minimal}} (a) If 
$\gamma>1$, then $\mathcal{S}_\gamma\in\cF$ and $S|\Lambda=0$. Hence, 
$\Lambda$ is not a uniqueness set. 

If $\gamma\leq 1$, then $\mathcal{S}_\gamma \cap \cF=\emptyset$. Suppose that there exists $g\in \cF$ such that  $g|\Lambda=0$. 
Then $g=FS$ for an entire function $F$, and
\begin{equation}
\label{integralc}
\int_\CC |F(w)|^2|S(w)|^2e^{-2\varphi(w)}\,dm(w)<\infty.
\end{equation}

Given $\Omega\subset \CC$, denote  
$$
\mathcal{I}[\Omega]=\int_{\Omega} |F(w)|^2\frac{d^2(w,\Lambda)}{(1+|w|)^{2\gamma}\rho^2(w)}\,dm(w).
$$
By \eqref{integralc}, 
$$
\mathcal{I}[\CC]<\infty.
$$
Denote by $D(z,r)$ the disc of radius $r$ centered at $z$. Let 
$$
\Omega_\varepsilon=\bigcup_{\lambda\in \Lambda}D(\lambda,\varepsilon\rho(\lambda)),
$$ 
where $\varepsilon$ is such that the discs 
$D(\lambda,2\varepsilon\rho(\lambda))$ are pairwise disjoint.
We have  
$$
\mathcal{I}[\CC]=\mathcal{I}[\CC\backslash \Omega_{2\varepsilon}]+\sum_{\lambda\in \Lambda}\mathcal{I}[D(\lambda,2\varepsilon\rho(\lambda))\backslash D(\lambda,\varepsilon\rho(\lambda))]+ \mathcal{I}[D(\lambda,\varepsilon\rho(\lambda))].
$$
It is clear that
$$
\mathcal{I}[{\CC\backslash \Omega_{2\varepsilon}}] \geq c_1\int_{\CC\backslash \Omega_{2\varepsilon}}  \frac{|F(w)|^2}{(1+|w|)^{2\gamma}}\,dm(w).
$$
On the other hand,
$$
\int_{ D(\lambda,\varepsilon\rho(\lambda))}  |F(w)|^2\,dm(w)\leq c_2\int_{ D(\lambda,2\varepsilon\rho(\lambda))\backslash D(\lambda,\varepsilon\rho(\lambda))} |F(w)|^2\,dm(w),
$$
and, hence, 
$$
\mathcal{I}[D(\lambda,2\varepsilon\rho(\lambda))\backslash D(\lambda,\varepsilon\rho(\lambda))]\geq c_3 \mathcal{I}[D(\lambda,\varepsilon\rho(\lambda))].
$$
Therefore 
$$
\int_{\CC} \frac{|F(w)|^2}{(1+|w|)^{2\gamma}}\,dm(w)<\infty, 
$$
the function $F$ is constant, and $g=cS$. Since $\mathcal{S}_\gamma \cap \cF=\emptyset$, 
we get a contradiction. Statement (a) is proved.

(b) Let $\gamma>0$. Set 
$$
f_\lambda(z)=\frac{{S}(z)}{{S'}({\lambda})(z-{\lambda})},\qquad 
\lambda\in\Lambda.
$$
It is obvious that $f_\lambda\in \cF$, $f_\lambda|\Lambda\backslash\{\lambda\}=0$ and $f_\lambda(\lambda)=1$. 
Hence $\Lambda$ is a weak interpolation set. If $\gamma\leq 0$, $\lambda\in \Lambda$, then by (a), $\Lambda\backslash\{\lambda\}$ is a uniqueness set for $\cF$.
Therefore, $\Lambda$ is not a weak interpolation set for $\cF$.
\hfill $\Box$

\subsection{Proof of Theorem \ref{T: Convergence}} 
We follow the scheme of proof proposed in \cite{L,LS} and  concentrate mainly on the places where the proofs differ. We need some auxiliary notions and lemmas. The proof of the first lemma is the same as in \cite[Lemma 4.1]{BDK}.

\begin{lemma}\label{L:inequalitysubha} For every $\delta>0$, there exists 
$C>0$ such that for functions $f$ holomorphic in $D(z,\delta\rho(z))$  
we have
$$
|f(z)|^2e^{-2\varphi(z)}\le  
\frac{C}{\rho(z)^2}\int_{D(z,\delta\rho(z))}|f(w)|^2e^{-2\varphi(w)}\,dm(w).
$$
\end{lemma}

\begin{definition} A simple closed curve $\gamma=\{r(\theta)e^{-i\theta}, \, \theta\in [0,2\pi]\}$ is called  K--bounded 
if $r$ is $C^1$-smooth and $2\pi$-periodic on the real line and
$|r'(\theta)|\le K$, $\theta\in\mathbb R$.
\end{definition}

Let $\gamma \in \mathbb R$, let $S\in \mathcal{S}_\gamma$, and let 
$\Lambda=\{\lambda_k\}$ be the zero set of $S$ ordered in such a 
way that $|\lambda_k|\le|\lambda_{k+1}|$, $k\ge 1$. We can 
construct a sequence of numbers  $R_N\to \infty$ and a sequence of 
contours $\Gamma_N$ such that 
\begin{enumerate}
\item $\Gamma_N=R_N\gamma_N$, where $\gamma_N$ are $K$-bounded, with $K>0$ independent of $N$.
\item $d_\rho(\Lambda,\Gamma_N)\geq \varepsilon $ for some $\varepsilon>0$ independent of $N$.
\item $\{\lambda_k\}_1^N$ lie inside $\Gamma_N$ and  $\{\lambda_k\}_{N+1}^{\infty}$ lie outside $\Gamma_N$.
\item $\Gamma_N\subset\{z:R_N-\rho(R_N)<|z|<R_N+\rho(R_N)\}$.
\end{enumerate}

Indeed, for some $0<\varepsilon<1$ the discs 
$D_k=D(\lambda_k,\varepsilon\rho(\lambda_k))$ are disjoint. 
For some $\delta=\delta(\varepsilon)>0$ we have 
$$
\varepsilon\rho(\lambda_k)>4\delta\rho(\lambda_N),\qquad
\bigl||\lambda_k|-|\lambda_N|\bigr|<\delta\rho(\lambda_N).
$$
Fix $\psi\in C^\infty_0[-1,1]$, $0<\psi<1$, such that 
$\psi>1/2$ on $[-1/2,1/2]$. 

Put $\Xi=\{k:\bigl||\lambda_k|-|\lambda_N|\bigr|<\frac14\delta\rho(\lambda_N)\}$, denote $\lambda_k=r_ke^{i\theta_k}$, $k\in\Xi$, and set 
$$
r(\theta)=1+\sum_{k\in\Xi}s_k\frac{\delta\rho(\lambda_N)}
{|\lambda_N|}\psi\bigl(\frac
{|\lambda_N|}{\delta\rho(\lambda_N)}(\theta-\theta_k)\bigr),
$$
where $s_k=1$, $k\le N$, $s_k=-1$, $k>N$.

Finally, set 
$$
\gamma_N=\{r(\theta)e^{i\theta}, \, \theta\in [0,2\pi]\}.
$$

\begin{lemma}\label{L: principal}
$$
R_N\rho(R_N)\int_{\gamma_N}|f(R_N\zeta)|^2e^{-2\varphi(R_N\zeta)}|d\zeta|\to 0,\qquad N\to \infty.
$$
\end{lemma}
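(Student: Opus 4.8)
The plan is first to remove the scaling. Writing $z=R_N\zeta$, so that $|dz|=R_N\,|d\zeta|$ and an integral over $\gamma_N$ becomes one over $\Gamma_N=R_N\gamma_N$, the quantity in the lemma equals
$$\rho(R_N)\int_{\Gamma_N}|f(z)|^2e^{-2\varphi(z)}\,|dz|,$$
so it suffices to show that this tends to $0$ as $N\to\infty$. The idea is to replace this boundary integral by an area integral, to which the finiteness of $\|f\|_\varphi^2$ applies.

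First I would record that $\rho(z)\asymp\rho(R_N)$ uniformly for $z\in\Gamma_N$: by property $(4)$ the points of $\Gamma_N$ satisfy $\bigl||z|-R_N\bigr|<\rho(R_N)$, and the standing regularity assumption $\rho(x+C\rho(x))\asymp\rho(x)$ then gives the comparison. Fixing a small parameter $\delta>0$ (to be specified) and applying Lemma~\ref{L:inequalitysubha} at each $z\in\Gamma_N$, I obtain
$$|f(z)|^2e^{-2\varphi(z)}\le\frac{C}{\rho(R_N)^2}\int_{D(z,\delta\rho(z))}|f(w)|^2e^{-2\varphi(w)}\,dm(w).$$

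Next I would integrate this over $\Gamma_N$ and interchange the order of integration by Fubini. The key geometric estimate is that for fixed $w$ the arc-length of $\{z\in\Gamma_N:\ w\in D(z,\delta\rho(z))\}$ is $\lesssim\delta\rho(R_N)$: since $\gamma_N$ is $K$-bounded, $\Gamma_N$ is a Lipschitz graph over the angle with $|dz|\asymp R_N\,d\theta$, so a disc of radius $\asymp\delta\rho(R_N)$ meets $\Gamma_N$ in an angular interval of length $\lesssim\delta\rho(R_N)/R_N$. This gives
$$\rho(R_N)\int_{\Gamma_N}|f(z)|^2e^{-2\varphi(z)}\,|dz|\le C'\int_{A_N}|f(w)|^2e^{-2\varphi(w)}\,dm(w),$$
where $A_N=\bigcup_{z\in\Gamma_N}D(z,\delta\rho(z))$ and $C'=C'(\delta)$ is a fixed constant once $\delta$ is chosen. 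Since $A_N$ lies in the annulus $\{w:\ R_N-c\delta\rho(R_N)<|w|<R_N+c\delta\rho(R_N)\}$ and $\rho(r)\lesssim r$ under the standing hypotheses, choosing $\delta$ small enough forces the inner radius $R_N-c\delta\rho(R_N)\to\infty$; hence, given any $T$, one has $A_N\subset\{|w|>T\}$ for all large $N$. As $\int_\CC|f|^2e^{-2\varphi}\,dm=\|f\|_\varphi^2<\infty$, its tail over $\{|w|>T\}$ tends to $0$ as $T\to\infty$, so $\int_{A_N}|f|^2e^{-2\varphi}\,dm\to0$, which proves the lemma.

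The step I expect to be the main obstacle is the simultaneous control of the covering multiplicity in the Fubini step and the requirement that the neighborhood $A_N$ recede to infinity. In the critical regime $\rho(r)\asymp r$ (the weight $\varphi(r)=(\log r)^2$) the annular neighborhood of $\Gamma_N$ has width comparable to $R_N$ itself, so one must exploit the freedom to take $\delta$ small in Lemma~\ref{L:inequalitysubha} to keep the inner radius growing, while checking that shrinking $\delta$ does not spoil the arc-length estimate. Verifying all of this uniformly in $N$, using only the $K$-boundedness of $\gamma_N$ and the regularity of $\rho$, is the technical heart of the argument.
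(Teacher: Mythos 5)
Your proof is correct and follows essentially the same route as the paper: apply Lemma~\ref{L:inequalitysubha} pointwise on the contour, interchange the integrals, bound the arc-length of $\{z\in\Gamma_N:\,w\in D(z,\delta\rho(z))\}$ by $\lesssim\delta\rho(R_N)$ using the $K$-boundedness of $\gamma_N$, and conclude via the tail of the convergent integral $\|f\|_\varphi^2$ over the annular neighborhood of $\Gamma_N$ (the paper's $C_N$, your $A_N$). The only difference is that you make explicit two points the paper leaves implicit --- the multiplicity estimate and the use of a small $\delta$ to ensure $A_N$ recedes to infinity in the critical regime $\rho(r)\asymp r$ --- which is a sound refinement, not a different argument.
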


\begin{proof} Set $C_N=\cup_{\zeta\in \gamma_N} D(R_N\zeta, \rho(R_N\zeta))$. Since $\rho(R_N\zeta)\asymp \rho(R_N)$, $\zeta\in\gamma_N$, by Lemma \ref{L:inequalitysubha} we have 
\begin{multline*}
R_N\rho(R_N)\int_{\gamma_N}|f(R_N\zeta)|^2e^{-2\varphi(R_N\zeta)}|d\zeta|\\
\lesssim R_N\rho(R_N)\int_{\gamma_N} \Big[\frac{1}{\rho(R_N\zeta)^2}\int_{D(R_N\zeta, \rho(R_N\zeta))}|f(w)|^2e^{-2\varphi(w)}\,dm(w)\Big] |d\zeta|\\
\asymp \frac{R_N}{\rho(R_N)}\int_{C_N}|f(w)|^2e^{-2\varphi(w)}\Big(\int_{\gamma_N}\chi_{D(R_N\zeta, \rho(R_N\zeta))}(w) |d\zeta|\Big)\,dm(w)\\
\lesssim\int_{C_N}|f(w)|^2e^{-2\varphi(w)}\,dm(w) \to 0,\qquad N\to \infty.
\end{multline*}
\end{proof}

\subsection*{Proof of Theorem \ref{T: Convergence}} Let $\chi_N(z)=1$, if  $z$ lies inside  $\Gamma_N$ and $0$ otherwise. Put 
$$
\Sigma_N(z,f)=S(z)\sum_{k=1}^{N}\frac{f(\lambda_k)}{S'(\lambda_k)(z-\lambda_k)},
$$
and set 
$$
I_N(z,f)=\frac{1}{2\pi i}\int_{\Gamma_N}\frac{f(\zeta)}{S(\zeta)(z-\zeta)}d\zeta
$$
The Cauchy formula gives us that 
$$
I_N(z,f)=\sum_{k=1}^{N}\frac{f(\lambda_k)}{S'(\lambda_k)(z-\lambda_k)}-\chi_N(z)\frac{f(z)}{S(z)},\qquad z\notin \Gamma_N.
$$
Hence, 
$$ 
\Sigma_N(z,f)-{f(z)} =S(z)   I_N(z,f) +(\chi_N(z)-1){f(z)},
$$
and it remains only to prove that
$$
\|S I_N(\cdot,f)\|_{{\varphi_\beta}}\to 0,\qquad N\to\infty.
$$
Let $\omega$ be a Lebesgue measurable function such that 
\begin{equation}
\label{Omega}
\int_{0}^{\infty}\int_{0}^{2\pi} |\omega(re^{it})|^2e^{-2\varphi(r)}(1+r)^{-2\beta}rdr dt \le 1, 
\end{equation}
and let 
$$
J_N(f,\omega)=\int_{0}^{\infty}\int_{0}^{2\pi} \omega(re^{it})S(re^{it})I_N(re^{it},f)e^{-2\varphi(r)}(1+r)^{-2\beta} rdr dt.
$$
It remains to show that
$$
\sup |J_N(f,\omega)|\to 0, \qquad N\to \infty,
$$
where the supremum is taken over all $\omega$ satisfying \eqref{Omega}. 

We have
\begin{gather*}
2\pi iJ_N(f,\omega)=\int_{\Gamma_N}\frac{f(\zeta)}{S(\zeta)}\int_\CC\frac{\omega(z)S(z)}{z-\zeta}e^{-2\varphi(z)}(1+|z|)^{-2\beta}{dm(z)}d\zeta\\
=\int_{\Gamma_N}\frac{f(\zeta)}{S(\zeta)}\int_\CC\frac{\phi(z)}{z-\zeta}(1+|z|)^{-\beta-\gamma}{dm(z)}d\zeta, 
\end{gather*}
where
$$
\phi(z)=[\omega(z)e^{-\varphi(z)}(1+|z|)^{-\beta}][S(z)e^{-\varphi(z)}(1+|z|)^{\gamma}].
$$ 
Note that 
$$
\int_\CC|\phi(z)|^2\,dm(z)\leq C.
$$
Set $\psi(z)=R_N\phi(R_Nz)$. We have 
$$
\int_\CC|\psi(z)|^2\,dm(z)\le C.
$$ 
Changing the variables $z=R_Nw$ and $\zeta=R_N\eta$, we get 
$$
2\pi iJ_N(f,\omega)=
R_N\int_{\gamma_N}\frac{f(R_N\eta)}{S(R_N\eta)}\int_\CC\frac{\psi(w)}{w-\eta}(1+R_N|w|)^{-\beta-\gamma}{dm(w)}d\eta.
$$
Consider the operators 
$$
T_N(\psi)(\eta)=\int_\CC\frac{\psi(w)}{w-\eta}|w|^{-\beta-\gamma}{dm(w)}, \qquad \psi \in L^{2}(\CC,dm(w)).
$$
Since $\gamma+\beta\in (1/2,1)$, by \cite[Lemma~13]{LS}, the operators $T_N$ are bounded from $L^{2}(\CC,dm(w))$ into $L^2(\gamma_N)$ and  
$$
\sup_N \|T_N\|<\infty.
$$ 
Hence, by Lemma~\ref{L: principal} and by the property 
$r^{1-2\beta}=O(\rho(r))$, $r\to\infty$, we get 
\begin{multline*}
J_N(f,\omega)\lesssim  R_{N}^{1-\beta-\gamma}\Big|\int_{\gamma_N}\frac{f(R_N\eta)}{S(R_N\eta)}T_N(\psi)(\eta)d\eta\Big|\\
\lesssim R_{N}^{1-\beta}\int_{\gamma_N}{|f(R_N\eta)|}{e^{-\varphi(R_N\eta)}}|T_N(\psi)(\eta)|\,|d\eta|\\
\lesssim\Big( R_{N}\rho(R_N)\!\int_{\gamma_N}|f(R_N\eta)|^2{e^{-2\varphi(R_N\eta)}}|d\eta|\Big)^{1/2}
\\
\times\|T_N(\psi)\|_{L^{2}(\CC,dm(w))} \to 0,
\qquad N\to\infty.
\end{multline*}
This completes the proof. 
\hfill $\Box$

\subsection{Proof of Theorem \ref{sharp}}
It suffices to find $f\in \cF$ and a sequence $N_k$ such that 
(in the notations of the proof of Theorem~\ref{T: Convergence})
\begin{equation}
A_k=\Bigl\|S\chi_{_{N_k}}\int_{\Gamma_{N_k}}\frac{f(\zeta)}
{S(\zeta)(\cdot-\zeta)}\,d\zeta\Bigr\|_{\varphi_\beta}\not\to 0,\qquad k\to\infty.
\label{x14}
\end{equation}
We follow the method of the proof of \cite[Theorem~11]{LS}.
Let us write down the Taylor series of $S$:
$$
S(z)=\sum_{n\ge 0}s_nz^n.
$$
Since $S\in\mathcal S_\gamma$, we have
$$
|s_n|\le c\exp\Bigl(-\frac na\ln\frac n{ae}-\frac \gamma{a}\ln n\Bigr),\qquad n>0.
$$
Choose $0<\varepsilon<\frac a2-2\beta$. Given $R>0$ consider
$$
S_R=\sum_{|n-aR^a|<R^{{\scriptstyle  
\frac a2}+\varepsilon}}s_nz^n.
$$
Then for every $n$ we have 
$$
|S(z)-S_R(z)|e^{-|z|^a}=O(|z|^{-n}),\qquad \bigl||z|-R\bigr|<\rho(R),\qquad R\to\infty.
$$

Next we use that for some $c>0$ independent of $n$,
$$
\int_0^\infty r^{2n+1}e^{-2r^a}\,dr\le c
\int_{|r-({\scriptstyle \frac na})^{1/a}|<n^{(1/a)-(1/2)}} r^{2n+1}e^{-2r^a}\,dr.
$$ 
Therefore,
\begin{gather*}
\|S_R\|^2_\varphi=\sum_{|n-aR^a|<R^{{\scriptstyle\frac a2}+\varepsilon}}
\pi |s_n|^2\int_0^\infty r^{2n+1}e^{-2r^a}\,dr
\\
\le \sum_{|n-aR^a|<R^{{\scriptstyle\frac a2}+\varepsilon}}
c |s_n|^2\int_{|r-({\scriptstyle\frac na})^{1/a}|<n^{(1/a)-(1/2)}} r^{2n+1}e^{-2r^a}\,dr
\\
\le\sum_{|n-aR^a|<R^{{\scriptstyle \frac a2}+\varepsilon}}
c |s_n|^2\int_{|r-R|<c_1R^{1-{\scriptstyle\frac a2}+\varepsilon}} r^{2n+1}e^{-2r^a}\,dr
\\
\le\sum_{n\ge 0}
c |s_n|^2\int_{|r-R|<c_1R^{1-{\scriptstyle\frac a2}+\varepsilon}} r^{2n+1}e^{-2r^a}\,dr
\\
=c\int_{|r-R|<c_1R^{1-{\scriptstyle \frac a2}+\varepsilon}} \sum_{n\ge 0}
 |s_n|^2r^{2n+1}e^{-2r^a}\,dr
\\
=c\int_{\bigl||z|-R\bigr|<c_1R^{1-{\scriptstyle\frac a2}+\varepsilon}} |S(z)|^2
e^{-2|z|^a}\,dm(z)\le c_2R^{2-{\scriptstyle\frac a2}+\varepsilon-2\gamma}.
\end{gather*}
Fix $\varkappa$ such that
$$
1-\frac a4+\frac{\varepsilon}2-\gamma<\varkappa<1-\beta-\gamma.
$$

Choose a sequence $N_k$, $k\ge 1$, such that for 
$R_k=|\lambda_{N_k}|$ we have $R_{k+1}>2R_k$, $k\ge 1$, and
$$
\Bigl|e^{-|z|^a}\sum_{m\not=k}S_{R_m}(z)R_m^{-\varkappa}\Bigr|\le \frac 1{|z|^{\gamma+1}},\qquad \bigl||z|-R_k\bigr|<\rho(R_k),\quad k\ge 1.
$$

Set
$$
f=\sum_{k\ge 1}S_{R_k}R_m^{-\varkappa}.
$$
Then $f\in\cF$, and
$$
\frac fS=R_k^{-\varkappa}+O(R_k^{-1-\varkappa}) \quad \text{on\ \ }\Gamma_{N_k},\quad k\to\infty.
$$
Hence,
$$
\Bigl|S(z)\int_{\Gamma_{N_k}}\frac{f(\zeta)}
{S(\zeta)(z-\zeta)}\,d\zeta\Bigr|\ge cR_k^{-\varkappa}\frac{e^{|z|^a}}{(1+|z|)^\gamma},\qquad |z|<\frac {R_k}2,
$$
and finally 
$$
A_k\ge cR_k^{-\varkappa}\Bigl(\int_0^{R_k/2}\frac {r^{1-2\beta}\,dr}{(1+r)^{2\gamma}}\Bigr)^{1/2}\to\infty, \qquad k\to\infty.  
$$
This proves \eqref{x14} and thus completes the proof of the theorem.
\hfill $\Box$

\end{document}